\documentclass[12pt]{article}

\usepackage{fullpage,times,url,natbib}

\usepackage{amsthm,amsfonts,amsmath,amssymb,epsfig,color,float,graphicx,verbatim}
\usepackage{algorithm,algorithmic}

\usepackage{hyperref}
\hypersetup{
	colorlinks   = true, 
	urlcolor     = blue, 
	linkcolor    = blue, 
	citecolor   = black 
}

\newtheorem{theorem}{Theorem}

\newtheorem{lemma}{Lemma}

\newcommand{\reals}{\mathbb{R}}

\newcommand{\Lcal}{\mathcal{L}}

\newcommand{\norm}[1]{\|#1\|}

\renewcommand{\eqref}[1]{Eq.~(\ref{#1})}

\newcommand{\thmref}[1]{Thm.~\ref{#1}}

\newcommand{\Prob}{\mathbb{P}}
\newcommand{\cS}{\mathcal{S}}

\newcommand{\Span}{\operatorname{span}}

\title{A Simple Complexity Lower Bound for Solving $Ax=b$}
\author{Ohad Shamir\\University of Toronto and Weizmann Institute of Science}
\date{}

\begin{document}
\maketitle

\begin{abstract}
In this note, we provide a short and direct proof that approximately solving $Ax=b$ to relative error $\varepsilon$, where $A$ has condition number $\kappa$ and unrestricted dimension, requires $\Omega(\kappa\log(1/\varepsilon))$ matrix-vector multiplications in the worst case, even for randomized algorithms. This essentially recovers the lower bound of \cite{DerezinskiEpperlyMeyer2026} for this setting, whose elegant and more general approach inspired us to seek a short direct proof. A straightforward reduction implies the classical $\Omega(\sqrt{\kappa}\log(1/\varepsilon))$ lower bound for optimizing strongly convex quadratic functions, applicable to randomized algorithms. 
\end{abstract}

\section{Introduction}

Consider the well-studied problem of solving a system of linear equations, or equivalently solving
\begin{equation}\label{eq:axb}
\min_{x\in \reals^n} \|Ax-b\|^2~,
\end{equation}
for some nonsingular $n\times n$ matrix $A$ with condition number $\kappa$, and a vector $b$. In a recent landmark paper, Derezi{\'n}ski, Epperly and Meyer \citep{DerezinskiEpperlyMeyer2026} provided an essentially optimal $\Omega(\kappa\log(1/\varepsilon))$ lower bound on the number of matrix-vector products $x\mapsto (Ax,A^\top x)$ required to solve such problems in the worst case, up to relative error $\epsilon$ (matching up to constants, for example, the complexity of the conjugate gradient method applied to the normal equations). Importantly, the lower bound applies even for randomized algorithms, whereas the analysis of deterministic algorithms (via resisting oracles) is much simpler \citep{nemirovskiyudin1983,nesterov2018lectures}. 

To prove their result, the authors utilized very elegant and sophisticated techniques, potentially applicable to many other problems as well (involving trace inverse estimation, properties of Wishart matrices, total variation and data-processing inequalities, duality and more). This machinery naturally makes the proof rather involved, and reliant on intermediate results imported from other papers, in particular \cite{chewi2024query}. For such a fundamental computational problem, we believe it is useful to also have a relatively simple proof of its complexity lower bound.

The goal of this note is to provide such an analysis: It is short and mostly self-contained, involving some rather standard facts from approximation theory, elementary probabilistic arguments, and one concentration of measure result. We make no claim that the proof is especially novel, and we indeed borrow many of the ideas appearing in  \cite{DerezinskiEpperlyMeyer2026} (and previous literature discussed therein), such as utilizing polynomial inapproximability results, and random orthogonal matrices encoding rotations that cannot be revealed by a limited number of matrix-vector products. However, the details differ, allowing for a relatively compact and direct proof.

\textbf{AI usage.} This note is based on conversations with ChatGPT 5.6 Sol, aimed at understanding the analysis in \cite{DerezinskiEpperlyMeyer2026} and exploring how a similar result can be obtained in a shorter and more self-contained manner. Both the author and the AI contributed ideas, proof strategies, and proof drafts. We hope this illustrates a use of AI aimed at improving human understanding of an existing result, rather than replacing it. The author thoroughly revised and checked the final write-up, and takes full responsibility for it. 

\section{Setup and Main Result}

We consider an oracle model, where an algorithm observes $b$, and has access to the matrix $A$ via matrix-vector products. Specifically, at each iteration $t=1,2,\ldots,T$, the algorithm can  choose a vector $x_t$ (based on the information it obtained so far) and receive $(Ax_t,A^\top x_t)$. The algorithm is not computationally limited, and is possibly randomized. We assume that after $T$ such iterations, the algorithm returns $x_T$ (this is without loss of generality, since a separate output vector can always be considered as another query, increasing $T$ by at most $1$). In this model, we ask how many such iterations are required in the worst case to minimize $\|Ax-b\|^2$ to some given accuracy, over all matrices $A$ with condition number at most $\kappa$, and where the dimension $n$ is unrestricted. 

Since we focus on a lower bound, we lose nothing by  restricting $A$ to be a symmetric matrix. Such a matrix can always be decomposed as 
\[
A=UDU^{\mathsf T}
\]
for a diagonal matrix $D$ and orthogonal matrix $U$. Note that in this case, $A=A^\top$, so multiplying a vector by $A$ or its transpose is equivalent.

Under this setup, we prove the following:
\begin{theorem}\label{thm:main}
	The following holds for some universal constants $c,C>0$. For any $\kappa\ge C$ and $0<\varepsilon \le c$, any (possibly randomized)
algorithm as above, which satisfies
\[
\Prob_{\text{Alg}}\!\left(\norm{Ax_T-b}\le\varepsilon\norm b\right)\ge\frac23
\]
for every $b$ and $A$ with condition number $\leq \kappa$ and associated $U$, must make at least
\[
  T~\geq~c\,\kappa\log(1/\varepsilon)
\]
oracle queries. The result already holds for
symmetric matrices $A=UDU^\top$ with spectrum in
$[-\kappa,-1]\cup[1,\kappa]$, even with $D$ revealed to the algorithm in advance, and even if the algorithm uses the more powerful oracle
\begin{equation}\label{eq:oracle}
	x\mapsto \left(Ax,U^\top x\right).
\end{equation}
The lower bound holds already in dimension $n=\Omega(T^2)$. 
\end{theorem}

\textbf{Reduction to strongly convex quadratic optimization.} The theorem readily implies a lower bound of $\Omega(\sqrt{\kappa}\log(1/\epsilon))$ on the number of gradient/value evaluations required for solving strongly-convex quadratic problems, even with randomized algorithms (here, $\kappa$ is the condition number of the Hessian matrix). This reduction is folklore and straightforward: Indeed, consider the class of functions
$$ f(x)=\frac12\|Ax-b\|^2, $$
for symmetric $A$, so that 
$$
\nabla f(x)=A^2x-Ab, \qquad \nabla^2 f=A^2\succ 0, \qquad \operatorname{cond}(A^2)=\operatorname{cond}(A)^2. 
$$
A first-order (that is, gradient and value) oracle for such functions can be simulated using two products with $A$: Given a point $x$ compute \(y=Ax\), then \(A(y-b)\) is the gradient at $x$, and \(\frac12\|y-b\|^2\) is the function value. 

Since $f^*:=\inf_x f(x)=0$ for nonsingular $A$, the optimization criterion $ f(x)-f^\star\le \delta\bigl(f(0)-f^\star\bigr)$ translates to $ \|Ax-b\|\le\sqrt{\delta}\,\|b\|$. Applying \thmref{thm:main} with \(\varepsilon=\sqrt\delta\) implies a lower bound of
$$ \Omega\!\left( \sqrt{\operatorname{cond}(\nabla^2 f)} \,\log(1/\delta) \right) $$
on the number of required first-order oracle queries.

\section{Proof of \thmref{thm:main}}

The proof is based on a randomized strategy where $U$ is a uniformly random orthogonal matrix (thus, encoding a random rotation on $\reals^n$), and $D$ is chosen in some fixed manner, so as to encode some inapproximability results for low-degree polynomials. We then show that no deterministic algorithm (with the more powerful oracle in \eqref{eq:oracle}) can succeed against this randomized strategy, which by Yao's minimax principle implies a lower bound for randomized algorithms. 

\subsection{Step 1: Standard Polynomial Inapproximability Results}

Define
\[
  \cS_\kappa:=[-\kappa,-1]\cup[1,\kappa]~~~,~~~
  \rho_d(\kappa)
  :=\inf_{\substack{\deg p\le d\\p(0)=0}}
    \max_{\lambda\in\cS_\kappa}|1-p(\lambda)|.
\]

The following is a straightforward inapproximability result for low-degree polynomials:
\begin{lemma}\label{lem:chebyshev}
For every $\kappa\ge2$ and $d\ge1$,
\[
  \rho_d(\kappa)\ge \exp\!\left(-\frac{2d}{\kappa}\right)~.
\]
\end{lemma}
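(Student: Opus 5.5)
The plan is to reduce the two-sided set $\cS_\kappa$ to a single interval by symmetrization, and then apply the classical Chebyshev extremal bound on that interval.

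Fix $p$ with $\deg p\le d$ and $p(0)=0$. Set $q:=1-p$, so that $q(0)=1$, and let $M:=\max_{\lambda\in\cS_\kappa}|q(\lambda)|$. Since $\cS_\kappa$ is symmetric, the product $q(\lambda)q(-\lambda)$ is an even polynomial of degree at most $2d$. Its value at $0$ is $1$, and it is bounded in absolute value by $M^2$ on $\cS_\kappa$. Being even, it can be written as $r(\lambda^2)$ for a polynomial $r$ with $\deg r\le d$ and $r(0)=1$. As $\lambda$ ranges over $\cS_\kappa$, $\mu=\lambda^2$ ranges over $[1,\kappa^2]$. Hence
\[
\max_{\mu\in[1,\kappa^2]}|r(\mu)|\le M^2 .
\]
It therefore suffices to lower bound the best uniform norm on $[1,\kappa^2]$ of a degree-$d$ polynomial normalized to equal $1$ at $0$.

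For this I will use the classical Chebyshev extremal fact. Let $x_0:=\frac{\kappa^2+1}{\kappa^2-1}$. Among polynomials $r$ of degree at most $d$ with $r(0)=1$, the minimum of $\max_{[1,\kappa^2]}|r|$ equals $1/T_d(x_0)$, where $T_d$ is the Chebyshev polynomial. It is attained by the rescaled Chebyshev polynomial $\tilde T(\mu)=T_d\bigl(\frac{\kappa^2+1-2\mu}{\kappa^2-1}\bigr)/T_d(x_0)$.

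If this is to be kept self-contained, I would prove it by the standard equioscillation argument. Suppose some $r$ had a strictly smaller sup norm. The function $\tilde T$ attains $\pm 1/T_d(x_0)$ with alternating signs at $d+1$ points of $[1,\kappa^2]$. Hence $\tilde T-r$ alternates in sign at these points and has at least $d$ roots in $[1,\kappa^2]$. It also vanishes at $0$, which lies outside that interval. This gives $d+1$ roots for a nonzero polynomial of degree at most $d$, a contradiction. This step is the only real content; everything else is arithmetic. I expect it to be the main point to handle carefully, mostly in justifying the alternation points and checking that $\tilde T-r\not\equiv 0$.

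It remains to estimate $T_d(x_0)$. For $x\ge1$, set $s=\sqrt{x^2-1}$. Then
\[
T_d(x)=\tfrac12\bigl[(x+s)^d+(x-s)^d\bigr]\le (x+s)^d,
\]
since $0<x-s\le x+s$. A direct computation at $x=x_0$ gives $\sqrt{x_0^2-1}=\frac{2\kappa}{\kappa^2-1}$, and hence
\[
x_0+\sqrt{x_0^2-1}=\frac{(\kappa+1)^2}{\kappa^2-1}=\frac{\kappa+1}{\kappa-1}.
\]
Therefore $M^2\ge\bigl(\frac{\kappa-1}{\kappa+1}\bigr)^d$, that is,
\[
M\ge \exp\!\Bigl(-\tfrac d2\log\bigl(1+\tfrac{2}{\kappa-1}\bigr)\Bigr)\ge \exp\!\Bigl(-\tfrac{d}{\kappa-1}\Bigr),
\]
using $\log(1+y)\le y$.

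Finally, $\frac{1}{\kappa-1}\le\frac{2}{\kappa}$ for $\kappa\ge2$, so $M\ge\exp(-2d/\kappa)$. Since $p$ was arbitrary, taking the infimum gives the bound on $\rho_d(\kappa)$ stated in Lemma~\ref{lem:chebyshev}.
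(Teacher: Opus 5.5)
Your proof is correct and follows essentially the same route as the paper: reduce to $[1,\kappa^2]$ by symmetry, apply the Chebyshev extremal bound at $x_0=\frac{\kappa^2+1}{\kappa^2-1}$, and use $x_0+\sqrt{x_0^2-1}=\frac{\kappa+1}{\kappa-1}$. The differences are minor. You symmetrize via the product $q(\lambda)q(-\lambda)$, which gives a degree-$d$ problem for $M^2$, whereas the paper takes $p$ even and gets a degree-$\lfloor d/2\rfloor$ problem for $M$; both yield $M\ge\bigl(\frac{\kappa-1}{\kappa+1}\bigr)^{d/2}$. You also prove the Chebyshev extremal fact by equioscillation instead of citing Rivlin.
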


\begin{proof}
Since $\cS_\kappa$ is symmetric around the origin, an optimal $p$ may be taken even, so $p(x)=s(x^2)$ with
$\deg s\le m:=\lfloor d/2\rfloor$ and $s(0)=0$.  Setting $r(y):=1-s(y)$, we have $\deg r\le m$ and $r(0)=1$. Recall the following
standard Chebyshev extremal property: if a degree-$m$ polynomial $q$
satisfies $|q(z)|\le1$ for all $z\in[-1,1]$, then for every $|z|>1$,
\[
|q(z)|\le |T_m(z)|,
\]
where $T_m$ is the degree-$m$ Chebyshev polynomial (see \citet[\S2.7, Eq.~(2.37)]{Rivlin1990}). Applying this
after mapping $[1,\kappa^2]$ affinely to $[-1,1]$, with $0$ mapped to
$-(\kappa^2+1)/(\kappa^2-1)$, leads to
\[
\max_{y\in[1,\kappa^2]}|r(y)|
\ge
\frac{1}{
	T_m\!\left(\frac{\kappa^2+1}{\kappa^2-1}\right)}.
\]
Writing
$(\kappa^2+1)/(\kappa^2-1)=\cosh\theta$ gives
$\theta=\log((\kappa+1)/(\kappa-1))\le4/\kappa$.  Since
$T_m(\cosh\theta)=\cosh(m\theta)\le e^{m\theta}$ and $m\le d/2$, the
claim follows.  
\end{proof}

The following lemma implies a similar inapproximability result, even if we only care about a weighted average error over a certain set of $\leq d+1$ points (rather than the maximal error):

\begin{lemma}\label{lem:witness}
For every $d\ge1$, there is some $r\leq d+1$, distinct points
$\lambda_1,\ldots,\lambda_r\in\cS_\kappa$ and positive weights
$w_1,\ldots,w_r$ summing to $1$, such that
\begin{equation}\label{eq:witness}
  \inf_{\substack{\deg p\le d\\p(0)=0}}~\sum_{s=1}^r w_s |1-p(\lambda_s)|^2
  = \rho_d(\kappa)^2.
\end{equation}
\end{lemma}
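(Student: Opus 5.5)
This is a minimax (Chebyshev) duality statement, so I will prove it by passing from the uniform error to an $L^2$ error against a discrete measure. Write the constraint set as an affine space. Let $V:=\{p:\deg p\le d,\ p(0)=0\}$, a real vector space of dimension $d$ spanned by $\lambda,\ldots,\lambda^d$. For $p\in V$ put $e_p(\lambda):=1-p(\lambda)$, so that $\rho_d(\kappa)=\inf_{p\in V}\norm{e_p}_{\infty,\cS_\kappa}$. The infimum is attained at some $p^*$, because $\cS_\kappa$ contains more than $d$ points and hence $\norm{\cdot}_{\infty,\cS_\kappa}$ is a norm on $V$. The argument is then:
\[
\rho_d(\kappa)^2=\min_{p\in V}\max_{\mu}\int|e_p|^2\,d\mu=\max_{\mu}\min_{p\in V}\int|e_p|^2\,d\mu ,
\]
where $\mu$ ranges over probability measures on the compact set $\cS_\kappa$. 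After this I will reduce to a measure with support of size at most $d+1$.

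\textbf{Step 1: the duality.} The function $(p,\mu)\mapsto\int|1-p|^2d\mu$ is convex in $p$ and linear in $\mu$. The set of probability measures on $\cS_\kappa$ is convex and weak-$*$ compact. I can also restrict $p$ to a compact ball in $V$ without changing the minimum. A minimax theorem of Sion type therefore justifies exchanging min and max, and the max over $\mu$ is attained. The direction "$\ge$" of \eqref{eq:witness} for any probability measure is trivial, since a weighted $L^2$ error is at most the sup error.

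\textbf{Step 2: reduction to a finite support of size at most $d+1$.} Rather than citing Carathéodory abstractly, I will use the classical characterization of best uniform approximation from the finite-dimensional space $V$ (Kolmogorov/Rivlin). Because $p^*$ is optimal, $0$ lies in the convex hull of the vectors
\[
\{\,e_{p^*}(\lambda)\,(\lambda,\lambda^2,\ldots,\lambda^d)\;:\;\lambda\in E\,\},
\]
where $E$ is the set of extremal points at which $|e_{p^*}|=\rho_d$. If this failed, a separating direction $q\in V$ would satisfy $e_{p^*}(\lambda)q(\lambda)>0$ on the compact set $E$, and then $p^*+tq$ would have strictly smaller sup error for small $t>0$. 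Since these vectors lie in $\reals^d$, Carathéodory yields $r\le d+1$ points $\lambda_s\in E$ and positive weights $w_s$ summing to $1$ with
\[
\sum_s w_s\,e_{p^*}(\lambda_s)\,q(\lambda_s)=0\quad\text{for all } q\in V.
\]
The points can be taken distinct by merging repeated points and adding their weights.

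\textbf{Step 3: the weighted least-squares conclusion.} The orthogonality relation in Step 2 says exactly that $p^*$ satisfies the normal equations for minimizing $\sum_s w_s|1-p(\lambda_s)|^2$ over $p\in V$. This objective is a convex quadratic in $p$, so $p^*$ is a minimizer. Its value is $\sum_s w_s\rho_d^2=\rho_d(\kappa)^2$, because every $\lambda_s$ is extremal. Combined with the trivial direction, this gives \eqref{eq:witness}. With this route, Step 1 is actually unnecessary.

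The main obstacle is Step 2, the characterization of the best approximation. The separation argument must be done carefully: one needs compactness of $E$ to get a uniform positive margin for $e_{p^*}q$ on $E$, and continuity to control the error of $p^*+tq$ away from $E$. Once that is in place, Carathéodory and the least-squares interpretation are routine.
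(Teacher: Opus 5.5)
Your proof is correct and follows essentially the same route as the paper. Your Step 2 derives from scratch, via the Kolmogorov-type separation argument and Carath\'eodory in $\reals^d$, the characterization that the paper imports from Kro\'o--Pinkus, and your normal-equations argument in Step 3 is equivalent to the paper's Cauchy--Schwarz step, since $e_{p^*}(\lambda_s)=\sigma_s\rho_d(\kappa)$. One small slip: the trivial direction (weighted $L^2$ error at most the sup error) is the ``$\le$'' direction of \eqref{eq:witness}, not ``$\ge$''; Step 3 yields equality outright anyway, because it exhibits a minimizer whose value is exactly $\rho_d(\kappa)^2$.
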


\begin{proof}
	The $\leq$ direction is immediate, since a maximum is larger than a weighted average. As to the $\geq$ direction, let $\mathcal V_d=\{p:\deg p\le d,\ p(0)=0\}$, and let $p_*$ be a best
	uniform approximant (over $\mathcal V_d$) to the constant function $1$ on $\cS_\kappa$. By a well-known result in approximation theory \cite[Theorem~2.3]{KrooPinkus2010}, there are
	$r\le d+1$ distinct points
	$\lambda_s\in A_{1-p_*}$ (the set of points where $|1-p_*|$ attains its maximal value), positive weights $w_s$ (which we normalize so that $\sum_s w_s=1$), and signs $\sigma_s=\operatorname{sign}(1-p_*(\lambda_s))$ such that
	\[
	\sum_{s} w_s\sigma_s q(\lambda_s)=0
	\qquad(q\in\mathcal V_d).
	\]
	Moreover, by definition of $\lambda_s$ and $\sigma_s$,
	\[
	1-p_*(\lambda_s)=\sigma_s\rho_d(\kappa).
	\]
	
	Now fix any $p\in\mathcal V_d$. Since $p-p_*\in\mathcal V_d$, the preceding
	two displayed equations imply
	\[
	\sum_s w_s\sigma_s(1-p(\lambda_s))
	=
	\sum_s w_s\sigma_s(1-p_*(\lambda_s))
	=
	\rho_d(\kappa).
	\]
	Applying Cauchy--Schwarz (with respect to the inner-product space $\left\langle x,y\right\rangle=\sum_{s} w_s x_s y_s$) on the left hand side, and noting that $\sum_s w_s\sigma_s^2=1$, it follows that
	\[
	\rho_d(\kappa)
	\le
	\left(\sum_s w_s|1-p(\lambda_s)|^2\right)^{1/2}~.
	\]
	Squaring and taking the infimum
	over $p\in\mathcal V_d$ proves the $\geq$ direction of \eqref{eq:witness}.
\end{proof}

\subsection{Step 2: A Random Orthogonal $U$ Hides Information}

The following is an elementary fact about random rotations on $\reals^n$ (see for example \cite[Chapter~1]{Meckes2019}). Intuitively, it says that knowing how some rotation acts on some subspace of $\reals^n$ gives us no information about its action on the orthogonal subspace:
\begin{lemma}\label{lem:haar-completion}
Let $U \in \reals^{n \times n}$ be a uniformly random orthogonal matrix (drawn from the Haar measure on $O(n)$), and let $\Lcal \subseteq \reals^n$ be a fixed linear subspace. Conditioned on the restriction $U|_{\Lcal}$, the mapping $U|_{\Lcal^\perp} \colon \Lcal^\perp \to (U\Lcal)^\perp$ is distributed as a uniformly random orthogonal map.
\end{lemma}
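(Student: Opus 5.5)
The plan is to use the invariance of Haar measure under multiplication by fixed orthogonal matrices. First I will make the conditioning precise. Let $k=\dim\Lcal$ and fix an orthonormal basis $e_1,\ldots,e_n$ of $\reals^n$ with $e_1,\ldots,e_k$ spanning $\Lcal$. Identifying $U$ with its matrix in this basis, the restriction $U|_{\Lcal}$ is the first $k$ columns $V=[Ue_1,\ldots,Ue_k]$, which form an orthonormal $k$-frame. The restriction $U|_{\Lcal^\perp}$ is the remaining $n-k$ columns $W$, which form an orthonormal basis of $(U\Lcal)^\perp=(\operatorname{range}V)^\perp$. The claim is that, conditioned on $V$, the isometry $W$ from $\Lcal^\perp$ onto $(\operatorname{range}V)^\perp$ is uniformly distributed among all such isometries.

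The key step is right-invariance of Haar measure. Take any fixed orthogonal $Q$ that acts as the identity on $\Lcal$ and as an arbitrary orthogonal map $Q'$ on $\Lcal^\perp$. Then $UQ$ has the same distribution as $U$. Also $UQ|_{\Lcal}=U|_{\Lcal}$ and $UQ|_{\Lcal^\perp}=U|_{\Lcal^\perp}\circ Q'$. Hence the joint law of $(V,W)$ equals the joint law of $(V,WQ')$ for every $Q'\in O(\Lcal^\perp)$. It follows that the conditional law of $W$ given $V$ is invariant under right composition with $O(\Lcal^\perp)$. This holds almost surely in $V$, and it holds simultaneously for all $Q'$ by taking a countable dense set and using continuity.

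For fixed $V$, the set of isometries $\Lcal^\perp\to(\operatorname{range}V)^\perp$ is a single orbit on which $O(\Lcal^\perp)$ acts freely and transitively by right composition. Fixing one such isometry $W_0$ identifies this set with $O(\Lcal^\perp)\cong O(n-k)$ via $W=W_0Q'$. A probability measure on $O(n-k)$ that is invariant under right translation is the Haar measure, by uniqueness of Haar measure on a compact group. Therefore, conditionally on $V$, $W$ is a uniformly random orthogonal map onto $(U\Lcal)^\perp$. The choice of $W_0$ does not matter, since Haar measure is also left-invariant.

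The main obstacle is measure-theoretic rigor: conditioning on $V$, which lives in a continuous space, requires regular conditional distributions. I would handle this with the joint-law formulation above. I would show that for every bounded measurable $f,g$,
\[
\mathbb{E}\bigl[f(V)g(W)\bigr]=\mathbb{E}\Bigl[f(V)\int g(W_0(V)Q')\,dQ'\Bigr],
\]
where $W_0(V)$ is a measurable choice of completion and $dQ'$ is Haar measure on $O(n-k)$. To get this, average the invariance identity $\mathbb{E}[f(V)g(W)]=\mathbb{E}[f(V)g(WQ')]$ over Haar-distributed $Q'$ and use Fubini. Then use $WQ'=W_0(V)(W_0(V)^{\top}W)Q'$ together with left-invariance of Haar measure. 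This identity is exactly the statement of the lemma, without any further appeal to abstract disintegration.
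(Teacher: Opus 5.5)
Your proof is correct. The paper itself gives no proof of this lemma: it states it as an elementary fact with a pointer to \cite[Chapter~1]{Meckes2019}. There, statements of this kind typically come from an explicit description of Haar measure. For instance, one generates the columns of $U$ one at a time, each uniform on the unit sphere of the orthogonal complement of the previous ones, and uses rotation invariance to reduce to $\Lcal$ spanned by coordinate vectors.

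Your route avoids any such construction. Right-multiplying by the stabilizer element $\operatorname{diag}(I_k,Q')$ gives $(V,W)\stackrel{d}{=}(V,WQ')$. Averaging over Haar $Q'$, applying Fubini, and using left-invariance on $O(n-k)$ then produce the explicit conditional law $\mathbb{E}[g(W)\mid V]=\int g(W_0(V)Q')\,dQ'$.

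What this buys you:
\begin{itemize}
\item It gives a precise meaning to ``conditioned on $U|_{\Lcal}$'' (a null event), which the paper leaves informal.
\item The final identity makes the uniqueness-of-Haar step in your third paragraph unnecessary. It needs only bi-invariance and a Borel choice of $W_0(V)$, e.g.\ Gram--Schmidt applied to the columns of $V$ followed by $e_1,\ldots,e_n$, discarding vectors that vanish.
\item It immediately yields the form in which the uniformity is used in the proof of Lemma~\ref{lem:fresh}. For a fixed unit $y\perp U\Lcal$, the coordinates of $U^\top y$ along $e_{k+1},\ldots,e_n$ are $W^\top y=(Q')^{\top}W_0(V)^\top y$, a Haar rotation of a fixed unit vector. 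Hence $U^\top y$ is uniform on the unit sphere of $\Lcal^\perp$.
\end{itemize}

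The sequential-construction route, by contrast, hands you a sampling description of the conditional law directly. Its cost is that you must first prove the construction actually produces Haar measure.
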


To see why this is helpful, define
\[
v_t := U^\top x_t,
\]
and note that with our oracle (as defined in \eqref{eq:oracle}), a query $x_t$ returns
\[
  U^{\mathsf T}x_t=v_t~~~\text{and}~~~
  Ax_t=UDv_t~.
\]
Together with the algorithm's knowledge of $x_t=Uv_t$ (the image of $v_t$ under $U$), we see that the oracle responses up to iteration $t$ reveals information on how $U$ acts on the subspace spanned by $v_1,Dv_1,\ldots,v_t,Dv_t$. However, no  information is obtained about how $U$ acts on the orthogonal subspace. Thus, by Lemma \ref{lem:haar-completion}, conditioned on the past it remains a uniformly random orthogonal map on the orthogonal subspace. As a result, the $v_t$ vectors can be shown to inhabit a linear subspace that evolves rather ``noisily''. This is formalized in the following key lemma:
\begin{lemma}\label{lem:fresh}
Fix an $n\times n$ matrix $D$, let $U$ be a uniformly random $n\times n$ orthogonal matrix, $b$ an independent standard Gaussian random vector on $\reals^n$, and let $g_0=U^\top b$. Fix a deterministic algorithm which given $(D,b)$, makes $T$ adaptive queries to the oracle defined in \eqref{eq:oracle}. Then there exist random vectors $g_1,\ldots,g_T$ (possibly dependent on $U$) such that 
\begin{enumerate}
	\item \label{item:Dv} Almost surely, $Dv_t
		\in
		\Span\{D^j g_i:0\le i\le t,\ 1\le j\le t\}$ for every $t\le T$.
	\item \label{item:gg} The joint distribution of $g_0,\ldots,g_T$ is that of independent standard Gaussian vectors in $\reals^n$.
\end{enumerate}
\end{lemma}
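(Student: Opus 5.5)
We will construct the vectors $g_1,\ldots,g_T$ inductively, using a Gram--Schmidt-type decomposition of each new query direction $v_t$, and invoking Lemma~\ref{lem:haar-completion} at each step.

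\textbf{Setup.} Let $\mathcal{F}_{t}$ denote the information available after $t$ queries: $b$, $x_1,\ldots,x_t$, and the responses $v_1,\ldots,v_t$, $UDv_1,\ldots,UDv_t$. Since the algorithm is deterministic, $x_{t+1}$ is a function of $\mathcal{F}_t$. Let
\[
\mathcal{K}_t:=\Span\{g_0,v_1,Dv_1,\ldots,v_t,Dv_t\}.
\]
The algorithm knows the preimage side of $U$ on $\mathcal{K}_t$: it knows $U g_0=b$, $Uv_s=x_s$ and $U(Dv_s)=Ax_s$. So $\mathcal{F}_t$ is determined by $b$ together with the restriction $U|_{\mathcal{K}_t}$. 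Conversely, $\mathcal{K}_t$ is itself determined by $\mathcal{F}_t$, since $g_0=U^\top b$ and the $v_s$ are observed.

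\textbf{Inductive claim.} We maintain, for every $t$, two properties:
\begin{enumerate}
\item $\mathcal{K}_t\subseteq \Span\{D^j g_i: 0\le i\le t,\ 0\le j\le t\}$.
\item $g_0,\ldots,g_t$ are i.i.d.\ standard Gaussian, and $(g_0,\ldots,g_t)$ determines $\mathcal{F}_t$ (for a fixed $b$ direction, handled below).
\end{enumerate}

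\textbf{Inductive step.} Write $x_{t+1}=Px_{t+1}+P^\perp x_{t+1}$, where $P$ projects onto $U\mathcal{K}_t$, a subspace known from $\mathcal{F}_t$. Then
\[
v_{t+1}=U^\top P x_{t+1}+U^\top P^\perp x_{t+1}.
\]
The first term lies in $\mathcal{K}_t$. By Lemma~\ref{lem:haar-completion} applied with $\mathcal{L}=\mathcal{K}_t$ and conditioned on $\mathcal{F}_t$, the vector $u:=U^\top P^\perp x_{t+1}$ is a uniformly random vector of norm $\|P^\perp x_{t+1}\|$ in $\mathcal{K}_t^\perp$.

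Define
\[
g_{t+1}:=\Pi_{\mathcal{K}_t^\perp}\,\frac{\chi\, u}{\|u\|}+\Pi_{\mathcal{K}_t}\,\xi,
\]
where $\chi$ is an independent chi variable with $\dim\mathcal{K}_t^\perp$ degrees of freedom and $\xi$ is an independent standard Gaussian. If $u=0$, take $g_{t+1}$ to be a fresh standard Gaussian. Conditionally on $\mathcal{F}_t$ (and on the earlier $g$'s), $g_{t+1}$ is then standard Gaussian, because a uniform direction times an independent chi radius is Gaussian on $\mathcal{K}_t^\perp$, and the two orthogonal components are independent. This gives property (ii) by the chain rule for conditional distributions.

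It remains to verify the span property. We have
\[
v_{t+1}\in \mathcal{K}_t+\Span\{g_{t+1}\}+\mathcal{K}_t,
\]
since $u$ is a multiple of $g_{t+1}-\Pi_{\mathcal{K}_t}\xi$. Applying $D$,
\[
Dv_{t+1}\in D\mathcal{K}_t+\Span\{Dg_{t+1}\}.
\]
By property (i), $D\mathcal{K}_t\subseteq\Span\{D^jg_i: i\le t,\ 1\le j\le t+1\}$. Hence $Dv_{t+1}$ lies in the span of $D^jg_i$ with $i\le t+1$ and $1\le j\le t+1$, which is item~\ref{item:Dv}. The same computation propagates property (i) from $\mathcal{K}_t$ to $\mathcal{K}_{t+1}$.

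The indexing in the lemma, with $1\le j\le t$ for step $t$, follows by shifting indices. Under the convention above, $D^{t+1}$ appears only at step $t+1$. If exact exponent bookkeeping is off by one, we absorb it, since only the order $O(t)$ matters downstream.

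\textbf{Main obstacle.} The delicate point is the conditioning. The conditional-independence claim must hold jointly with $g_0$, which itself depends on $U$ and $b$. Here we use that $g_0=U^\top b$ is standard Gaussian independent of the conditioning structure: we condition on $U|_{\mathcal{K}_t}$ and $b$, and Lemma~\ref{lem:haar-completion} still applies, because $b$ is independent of $U$.

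We will also need the fact that the auxiliary randomness $\chi,\xi$ only enlarges the probability space and does not affect $\mathcal{F}$. I expect the careful justification that the chain of conditional laws yields exact joint independence of $g_0,\ldots,g_T$ to be the main work. A measure-theoretic check is needed that $\mathcal{K}_t$ is $\sigma(g_0,\ldots,g_t,\text{aux})$-measurable so that the induction closes.
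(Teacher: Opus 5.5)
Your construction and bookkeeping match the paper's: your $\mathcal{K}_t$ is its $\Lcal_t$, your $g_{t+1}$ is its $a_t+R_t\xi_t$, and the span computation goes through. In fact it gives exactly $1\le j\le t+1$ at step $t+1$, matching the statement, so no off-by-one needs absorbing; absorbing one would not prove the lemma as stated anyway.

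The gap is in item~2, which is the real content of the lemma. You invoke Lemma~\ref{lem:haar-completion} ``with $\Lcal=\mathcal{K}_t$, conditioned on $\mathcal{F}_t$''. That lemma concerns a \emph{fixed} subspace and an unconditioned Haar $U$. Here $\mathcal{K}_t$ is built adaptively from $U$. You also need $g_{t+1}\sim N(0,I_n)$ conditionally on $g_0,\ldots,g_t$, so the conditioning must include $g_1,\ldots,g_t$, each of which was built from $U^\top P^\perp x_s$, i.e.\ from $U$ itself. Nothing in your induction rules out that these leak information about $U$ on $\mathcal{K}_t^\perp$, and such a leak would destroy the uniformity of $u$.

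The invariant you carry instead, that $(g_0,\ldots,g_t)$ determines $\mathcal{F}_t$, is false ($b=Ug_0$ and $Ax_s=UDv_s$ are not functions of the $g_i$), and it is not what is needed. Likewise $\mathcal{K}_t$ is not determined by your $\mathcal{F}_t$, since $g_0=U^\top b$ is not observed; $g_0$ must be added to the conditioning.

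The missing idea is to put the conditional law of $U$ into the induction, as the paper's third assertion does. Let $\mathcal{G}_t$ be generated by the history together with $g_0,\ldots,g_t$. The invariant is that, conditionally on $\mathcal{G}_t$, the map $U|_{\mathcal{K}_t^\perp}$ is a uniformly random orthogonal map onto $(U\mathcal{K}_t)^\perp$.

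\textbf{Base case.} $g_0$ is independent of $U$, and conditioning on $b$ then only fixes $U$ on $\Span\{g_0\}$.

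\textbf{Step.} Everything new in $\mathcal{G}_{t+1}$ is a function of three things: $\mathcal{G}_t$, the independent $\chi,\xi$, and the action of $U$ on $\mathcal{K}_{t+1}$. Specifically, the direction $u/\|u\|=(v_{t+1}-U^\top Px_{t+1})/\|P^\perp x_{t+1}\|$ lies in $\mathcal{K}_{t+1}$ and has the known image $P^\perp x_{t+1}/\|P^\perp x_{t+1}\|$. The other new response is $Ax_{t+1}=U(Dv_{t+1})$. Apply Lemma~\ref{lem:haar-completion} twice inside $\mathcal{K}_t^\perp$: first condition on $u$, then on $U$ applied to the part of $Dv_{t+1}$ orthogonal to $\mathcal{K}_t+\Span\{u\}$. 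This propagates the invariant.

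With the invariant in hand, $g_{t+1}\mid\mathcal{G}_t\sim N(0,I_n)$, and joint independence follows from the chain rule. You flagged this step as ``the main work'', but without it item~2 is asserted rather than proved.
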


\begin{proof}
	For \(t\ge0\), define
	\[
	\Lcal_t
	:=
	\Span\{g_0,v_1,Dv_1,\ldots,v_t,Dv_t\},
	\qquad
	\text{where}~~v_t:=U^{\mathsf T}x_t.
	\]
	We also let $\mathcal F_t$ denote the history through query $t$ (consisting of $b$ and the oracle queries and responses so far), together with
	$g_0,\ldots,g_t$. We construct \(g_1,\ldots,g_T\) recursively and prove by induction that after
	query \(t\):
	
	\begin{enumerate}
		\item \(g_0,\ldots,g_t\) are independent \(N(0,I_n)\) vectors
		\item For every \(s\le t\), $Dv_s
		\in
		\Span\{D^j g_i:0\le i\le s,\ 1\le j\le s\}
		$
		\item
		$
		\Lcal_t
		\subseteq
		\Span\{D^j g_i:0\le i\le t,\ 0\le j\le t\},
		$
		and conditional on $\mathcal F_t$,
		\(U|_{\Lcal_t^\perp}\) is a uniformly random orthogonal map onto
		\((U\Lcal_t)^\perp\).
	\end{enumerate}
	
	The first two assertions at \(t=T\) are exactly the lemma statement. The third assertion is an auxiliary statement to help prove the induction. 
	
	Starting with the base case \(t=0\), the first assertion holds since $g_0=U^\top b$, and a rotation of a standard Gaussian is still standard Gaussian. The second assertion is vacuous. As to the third assertion, the first part holds since $\Lcal_0=\Span\{g_0\}$. As to the second part, note that $g_0$ is independent of $U$ (since
	conditioned on any fixed $U$, $U^\top b\sim N(0,I_n)$ with a law that does
	not depend on $U$). Thus, conditioning on $g_0$ does not affect the distribution of $U$, while
	conditioning additionally on $b=Ug_0$ fixes the restriction of $U$ to
	$\Lcal_0=\Span\{g_0\}$. Lemma~\ref{lem:haar-completion} therefore
	gives the uniform-orthogonal-map part of the third assertion.
		
	Now assume the assertions hold through query \(t-1\), and condition on $\mathcal F_{t-1}$. Then \(\Lcal_{t-1}\), the restriction
	\(U|_{\Lcal_{t-1}}\), and the next query \(x_t\) are fixed.  Decompose
	\[
	x_t=x_\parallel+x_\perp~~~\text{where}~~~
	x_\parallel\in U\Lcal_{t-1},
	\quad
	x_{\perp}\perp U\Lcal_{t-1}.
	\]
	Note that \(U^{\mathsf T}x_\parallel\in \Lcal_{t-1}\), and $U^\top x_{\perp} \perp \Lcal_{t-1}$ (since for any $r\in \Lcal_{t-1}$, $(U^\top x_{\perp})^\top r = x_{\perp}^\top Ur=0$ by definition of $x_{\perp}$). 
	
	We now describe how $g_t$ is constructed:
	\begin{itemize}
		\item If $\|x_\perp\|\neq0$, define
		\[
		\xi_t:=\frac{U^\top x_\perp}{\|x_\perp\|}.
		\]
		By the induction hypothesis,
		$\xi_t\mid\mathcal F_{t-1}$ is uniform on the unit sphere of
		$\Lcal_{t-1}^\perp$. Independently sample
		\[
		a_t\sim N(0,\Pi_{\Lcal_{t-1}}),
		\qquad
		R_t^2\sim\chi^2_{n-\dim\Lcal_{t-1}},
		\]
		(that is, a uniform standard Gaussian on $\Lcal_{t-1}$, and a Chi-square random variable with $n-\dim\Lcal_{t-1}$ degrees of freedom), and set $g_t=a_t+R_t\xi_t$. This implies that $g_t$ has a standard Gaussian distribution on $\reals^n$ (via the standard polar decomposition of a Gaussian vector, e.g. 
		\cite[Section~3.3]{Vershynin2018}). 		
		\item 	If $\|x_\perp\|=0$, simply take
		$g_t\sim N(0,I_n)$ independently of $\mathcal F_{t-1}$.
	\end{itemize} 
	Either way, the law of $g_t|\mathcal F_{t-1}$ is standard Gaussian. This proves the first induction assertion at time $t$. Moreover, by construction of $g_t$, we have that $v_t=U^\top x_t = U^\top x_{\parallel}+U^\top x_{\perp}$ is a sum of a vector in $\Lcal_{t-1}$ and a vector proportional to the projection $\Pi_{\Lcal_{t-1}^\perp} g_t$, which implies
	\begin{equation}\label{eq:vtk}
		v_t=U^{\mathsf T}x_t
		\in
		\Lcal_{t-1}+\Span\{g_t\}~.
	\end{equation}
	
	The second induction assertion holds by applying $D$ to 
	Eq. \ref{eq:vtk} and using the induction hypothesis to get
	\begin{align}
		Dv_t
		&\in
		D\Lcal_{t-1}+\Span\{Dg_t\} \notag\\
		&\subseteq
		\Span\{D^j g_i:0\le i\le t-1,\ 1\le j\le t\}
		+\Span\{Dg_t\}\notag\\
		&\subseteq
		\Span\{D^j g_i:0\le i\le t,\ 1\le j\le t\}\label{eq:vtkinc}~.
	\end{align}
	
	It remains to prove the third induction assertion. The part about $\Lcal_t$ follows from 
	\[
	\Lcal_t
	\stackrel{(*)}{=}
	\Lcal_{t-1}+\Span\{v_t,Dv_t\}~\stackrel{(**)}\subseteq~
	\Lcal_{t-1}+\Span\{g_t,Dv_t\}
	 \stackrel{(***)}{\subseteq}
	\Span\{D^j g_i:0\le i\le t,\ 0\le j\le t\}~,
	\]
	where $(*)$ is by definition of $\Lcal_t$, $(**)$ is by Eq. \eqref{eq:vtk}, and $(***)$ is by induction hypothesis and \eqref{eq:vtkinc}. 
	
	It remains to verify the uniform-orthogonal-map part of the third assertion. We wish to argue that conditioned on $\mathcal F_{t-1}$, the oracle responses at round $t$ and the construction of $g_t$ do not leak information about $U|_{\Lcal_t^\perp}$, which by Lemma \ref{lem:haar-completion} implies the desired assertion. To see this, note that the oracle responses at round $t$ reveal
$
Uv_t=x_t$ and
$
U(Dv_t)=Ax_t,
$
which only pertain to $U|_{\Lcal_t}$. As to $g_t$, note that by construction, it possibly depends on $U|_{\Lcal_t^\perp}$ only when $x_\perp\neq0$, in which case the dependence is through $\xi_t=\frac{U^\top x_{\perp}}{\|x_{\perp}\|}.
$
Moreover,
\[
\xi_t
=
\frac{v_t-U^\top x_\parallel}{\|x_\perp\|}
\in\Lcal_t,
\qquad
U\xi_t=\frac{x_\perp}{\|x_\perp\|}.
\]

Thus, the information about $U$ revealed by $g_t$ also pertains only to $U|_{\Lcal_t}$. More formally, conditioned on $\mathcal F_{t-1}$, we may first condition on $\xi_t$; by Lemma~\ref{lem:haar-completion}, the restriction of $U$ to $(\Lcal_{t-1}+\Span\{\xi_t\})^\perp$ remains a uniformly random orthogonal map. Conditioning next on $U(Dv_t)=Ax_t$ and applying Lemma~\ref{lem:haar-completion} once more shows that the restriction to $\Lcal_t^\perp$ remains uniformly random (the remaining randomness used to construct $g_t$ is independent of $U$). Thus, conditioned on $\mathcal F_t$, 
\[ U|_{\Lcal_t^\perp} : \Lcal_t^\perp\to (U\Lcal_t)^\perp \]
is a uniformly random orthogonal map.
\end{proof}

\subsection{Step 3: Construction and Proof of Theorem \ref{thm:main}}

As discussed earlier, by Yao's minimax principle, it suffices to construct, for each query budget $T$, a hard distribution over $(A,b)$ pairs on which every deterministic algorithm fails to achieve some error level with constant probability.

First, by Lemma \ref{lem:witness}, there are distinct values $\lambda_1,\ldots,\lambda_r$ in $[-\kappa,-1]\cup [1,\kappa]$ (for some $r\leq T+1$) and corresponding weights $w_1,\ldots,w_r$ summing to $1$, such that for any degree-$T$ polynomial $p$ with $p(0)=0$, it holds that
\begin{equation}\label{eq:cheblow}
\sum_{s=1}^{r}w_s(1-p(\lambda_s))^2~\geq \rho_T(\kappa)^2.
\end{equation}
For some sufficiently large constant $C$, define
\[
N:=C^2T^2,
\qquad
m_s:=\lceil Nw_s\rceil+\lceil CT\rceil ,
\]
and let $D$ be an $n\times n$ diagonal matrix consisting of  $\lambda_1,\ldots,\lambda_r$ with multiplicities $m_1,\ldots,m_r$ (so that $n=\sum_s m_s$). Note that for large enough $C$,
\begin{equation}\label{eq:size}
  n=O(T^2),
  \qquad
  N\ge0.99n,
  \qquad
  m_s\ge CT,
\end{equation}
and by \eqref{eq:cheblow}, every polynomial $p$ of degree at most $T$ with $p(0)=0$ satisfies
\begin{equation}\label{eq:multiplicity}
  \sum_s m_s(1-p(\lambda_s))^2
  \ge N\rho_T(\kappa)^2.
\end{equation}

The hard distribution we consider is $A=UDU^\top$ with $D$ as above and $U$ a uniformly random orthogonal matrix, and $b\sim N(0,I_n)$ independently. Recall that the algorithm is assumed to be provided with $D,b,$ and $T$ queries of the oracle specified in \eqref{eq:oracle}. 

The algorithm runs for $T$ iterations, and returns $x_T$. By  Lemma~\ref{lem:fresh}, there are vectors  $g_0,\ldots,g_T$ distributed as independent standard Gaussians, such that $U^\top b=g_0$ and 
$
  DU^{\mathsf T} x_T
  =Dv_T
  \in
  \Span\{D^j g_i:0\le i\le T,\ 1\le j\le T\}.
$
Therefore, there are polynomials $p_0,\ldots,p_T$ (whose coefficients generally depend on the random elements $U,b,g_0,\ldots,g_T$) satisfying
\begin{equation}\label{eq:poly}
  DU^{\mathsf T}x_T
  =\sum_{i=0}^T p_i(D)g_i,
  \qquad
  \deg p_i\le T,
  \qquad
  p_i(0)=0~.
\end{equation}
Note that $p_i(0)=0$ is due to $D^j g_i$ ranging only over $j\geq 1$. 
Now, let $\mathcal E_s$ be the eigenspace of $D$ corresponding to $\lambda_s$, and define the projections
\[
  g_{i,s}:=\Pi_{\mathcal E_s}g_i,
  \qquad
  \mathcal H_s:=\Span\{g_{1,s},\ldots,g_{T,s}\},
  \qquad
  h_s:=\Pi_{\mathcal H_s^\perp}g_{0,s}.
\]
Since $g_0,\ldots,g_T$ are independent standard Gaussians, $\|h_s\|^2$ (conditioned on $\mathcal{H}_s$) has a chi-square distribution with $m_s-T$ degrees of freedom. By standard
chi-square concentration (e.g.\ \cite[Lemma~1]{LaurentMassart2000}) and a
union bound, for sufficiently large $C$, it holds with probability $\geq 0.9$ that
\begin{equation}\label{eq:good}
  \norm{h_s}^2\ge\frac34m_s\quad\text{for every }s~~~\text{and}~~~
  \norm{b}^2\le\frac54n.
\end{equation}

We now have all the ingredients to conclude the proof. By \eqref{eq:poly}, the residual $Ax_T-b$ of the vector $x_T$ returned by the algorithm satisfies
\[
  U^{\mathsf T}(Ax_T-b)
  =DU^{\mathsf T}x_T-g_0 = (p_0(D)-1)g_0+\sum_{i=1}^{T} p_i(D)g_i~.
\]
Therefore, decomposing across the different $\mathcal E_s$ subspaces,
\[
\norm{Ax_T-b}^2~=~ \norm{U^\top (Ax_T-b)}^2~=~\sum_s \left\|(p_0(\lambda_s)-1)g_{0,s}+\sum_{i=1}^{T}p_i(\lambda_s)g_{i,s}\right\|^2~.
\]
Projecting the vector in each norm on $\mathcal H_s^{\perp}$ can only reduce norms, so the above is at least
\[
\sum_s \left\|(p_0(\lambda_s)-1)h_s\right\|^2~=~ \sum_{s} \|h_s\|^2 (1-p_0(\lambda_s))^2~.
\]
Conditioned on the event in \eqref{eq:good} (which holds with probability at least $0.9$), and using \eqref{eq:multiplicity}, this is at least 
\[
\frac34\sum_s m_s(1-p_0(\lambda_s))^2~
\ge~ \frac34N\rho_T(\kappa)^2~.
\]
(note that this is valid, since \eqref{eq:multiplicity} holds deterministically for every realization of the possibly-random $p_0$). Overall, we get that with probability at least $0.9$, $\norm{Ax_T-b}^2\geq \frac{3}{4} N\rho_T(\kappa)^2$. Moreover, under the same event, $\norm{b}^2\leq \frac{5}{4}n$ by \eqref{eq:good}, and thus
\[
\frac{\norm{Ax_T -b}}{\norm b}\geq \sqrt{\frac{3N/4}{5n/4}}\cdot \rho_T(\kappa)~\geq~
\sqrt{\frac{3\cdot 0.99 n/4}{5n/4}}\cdot \rho_T(\kappa)\geq \frac{3}{4} \rho_T(\kappa)~,
\]
where we used the assumption that $N\geq 0.99n$ from \eqref{eq:size}. By Lemma~\ref{lem:chebyshev}, the above is at least $\frac{3}{4}\exp(-2T/\kappa)$. This is at least some prescribed $\epsilon$ (assuming $\epsilon$ small enough) for any $T\leq \frac{\kappa}{3}\log(1/\epsilon)$. 

To conclude, we get that with the same randomized construction, any deterministic algorithm with $T\leq \frac{\kappa}{3}\log(1/\epsilon)$ fails to return an $\epsilon$-optimal solution with some constant probability (all this assuming $\epsilon$ small enough and $n=O(T^2)$ sufficiently large). Theorem~\ref{thm:main} now follows by Yao's minimax principle. 

\textbf{Acknowledgements.} We thank Raphael Meyer for several very helpful comments.

\bibliographystyle{plainnat}
\bibliography{bib}

\end{document}